\newtheorem{theorem}{Theorem}
\newtheorem{corollary}[theorem]{Corollary}
\newtheorem{lemma}[theorem]{Lemma}
\newtheorem{rem}[theorem]{Remark}
\DeclareMathOperator{\R}{\mathbb{R}}
\newcommand{\dd}{\ \mathrm{d}}
\newcommand{\del}{\partial}
\renewcommand{\rho}{\varrho}
\renewcommand{\phi}{\varphi}
\title{On two Kuznetsov's conjectures}
\author{Florian Oschmann}
\address{Institute of Mathematics, Czech Academy of Sciences,
\v Zitn\'a 25, 115 67 Praha 1, Czech Republic.}
 \email{oschmann@math.cas.cz}
 \date{\today}
\begin{document}
\maketitle
\begin{abstract}
We provide a proof and a counterexample to two conjectures made by N.~Kuznetsov.
\end{abstract}
\section{Introduction}
Mean value properties (MVP) as well as weighted MVP play a crucial role in the theory of partial differential equations. In the recent paper \cite{Kuznetsov2022}, the author investigates MVP for so-called $(\mu-)$panharmonic functions satisfying
\begin{align}\label{pan}
\Delta u - \mu^2 u=0 \text{ in } \Omega, \quad \mu\in \R\setminus{\{0\}},
\end{align}
where $\Omega\subset \R^2$ is a domain, and $\Delta=\del_{x_1}^2 + \del_{x_2}^2$ is the two-dimensional Laplacian. One of his main results reads as follows:
\begin{theorem}[{\cite[Theorem 3]{Kuznetsov2022}}]\label{theo1}
Let $\Omega\subset\R^2$ be a domain. If $u$ is panharmonic in $\Omega$, then
\begin{align}\label{i1}
a(\mu r) u(x) = \fint_{D_r(x)} u(y) \log\frac{r}{|x-y|}\dd y, \quad a(t)=\frac{2[I_0(t)-1]}{t^2},
\end{align}
for any $r>0$ such that $\overline{D_r(x)}=\{y\in \R^2: |x-y|\le r\}\subset \Omega$. Here, we denoted
\begin{align*}
\fint_{D_r(x)} u(y) \dd y = \frac{1}{\pi r^2}\int_{D_r(x)} u(y) \dd y,
\end{align*}
and $I_0(t)$ is the modified Bessel function of the first kind of order zero.
\end{theorem}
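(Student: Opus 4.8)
The plan is to reduce the two-dimensional identity to a one-dimensional one by averaging over concentric circles. Fix $x$ and $r$ with $\overline{D_r(x)}\subset\Omega$, and set
\begin{align*}
M(s) = \frac{1}{2\pi s}\int_{\partial D_s(x)} u\dd\sigma = \frac{1}{2\pi}\int_0^{2\pi} u(x+s\omega)\dd\theta, \quad 0\le s\le r, \ \omega=(\cos\theta,\sin\theta),
\end{align*}
the mean of $u$ over the circle of radius $s$. Since $u$ solves an elliptic equation with constant coefficients it is smooth, so $M$ is smooth and $M(0)=u(x)$. Differentiating under the integral sign and applying the divergence theorem together with \eqref{pan} gives
\begin{align*}
s M'(s) = \frac{1}{2\pi}\int_{\partial D_s(x)}\del_n u\dd\sigma = \frac{1}{2\pi}\int_{D_s(x)}\Delta u\dd y = \frac{\mu^2}{2\pi}\int_{D_s(x)} u\dd y = \mu^2\int_0^s \rho\, M(\rho)\dd\rho.
\end{align*}
Differentiating once more yields the modified Bessel equation $M''+\tfrac1s M' = \mu^2 M$.

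The general solution of this equation is $c_1 I_0(\mu s)+c_2 K_0(\mu s)$; the second fundamental solution $K_0(\mu s)$ is singular at the origin, so continuity of $u$ forces $c_2=0$. Since $I_0(0)=1$, the normalisation $M(0)=u(x)$ then gives
\begin{align*}
M(s) = u(x)\, I_0(\mu s).
\end{align*}
This is the circular mean value property underlying \eqref{i1}, and reduces the problem to integrating against the logarithmic weight. Writing the right-hand side of \eqref{i1} in polar coordinates,
\begin{align*}
\fint_{D_r(x)} u(y)\log\frac{r}{|x-y|}\dd y = \frac{2}{r^2}\int_0^r s\, M(s)\log\frac{r}{s}\dd s = \frac{2u(x)}{r^2}\int_0^r s\, I_0(\mu s)\log\frac{r}{s}\dd s.
\end{align*}
After the substitution $t=\mu s$, writing $T=\mu r$, the claim \eqref{i1} reduces to the Bessel identity
\begin{align*}
\int_0^T t\, I_0(t)\log\frac{T}{t}\dd t = I_0(T)-1.
\end{align*}

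I expect this last identity to be the main computational obstacle, though it is tractable. To prove it I would use the recurrence consequence $\frac{\dd}{\dd t}\big(t I_1(t)\big)=t I_0(t)$, so that $\int_0^\rho t\, I_0(t)\dd t = \rho\, I_1(\rho)$, then write $\log\frac{T}{t}=\int_t^T\rho^{-1}\dd\rho$ and exchange the order of integration to obtain
\begin{align*}
\int_0^T t\, I_0(t)\log\frac{T}{t}\dd t = \int_0^T \frac{1}{\rho}\Big(\int_0^\rho t\, I_0(t)\dd t\Big)\dd\rho = \int_0^T I_1(\rho)\dd\rho = I_0(T)-1,
\end{align*}
using $I_0'=I_1$ and $I_0(0)=1$ in the last step. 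Substituting back reproduces the prefactor $a(\mu r)=2[I_0(\mu r)-1]/(\mu r)^2$, which completes the proof.
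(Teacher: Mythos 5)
Your proof is correct. A point of context first: the paper itself gives no proof of this statement --- Theorem~\ref{theo1} is quoted verbatim from \cite[Theorem 3]{Kuznetsov2022} as a known result --- so there is no in-paper argument to compare against. Your argument is, however, exactly the panharmonic analogue of the paper's proof of Lemma~\ref{lem}: there the spherical mean of a harmonic function is the constant $u(x)$ and the logarithmic weight is integrated out in polar coordinates via the elementary integral $\int_0^r s^{d-1}\log s \dd s$; in your argument the circular mean becomes $M(s)=u(x)I_0(\mu s)$ --- correctly derived from $sM'(s)=\mu^2\int_0^s\rho M(\rho)\dd\rho$, the resulting ODE $M''+s^{-1}M'=\mu^2M$, and boundedness at $s=0$ eliminating the $K_0$ branch --- and the elementary integral is replaced by the Bessel identity $\int_0^T tI_0(t)\log\frac{T}{t}\dd t=I_0(T)-1$. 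Your derivation of that identity via $\frac{\dd}{\dd t}\bigl(tI_1(t)\bigr)=tI_0(t)$, Fubini on $\log\frac{T}{t}=\int_t^T\rho^{-1}\dd\rho$, and $I_0'=I_1$ is clean and complete, and the prefactor $a(\mu r)=2[I_0(\mu r)-1]/(\mu r)^2$ comes out as claimed (note $I_0$ and $a$ are even, so the sign of $\mu$ is immaterial). All steps check out.
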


As $\mu\to 0$, one should expect that solutions to \eqref{pan} formally converge to a harmonic function. Moreover, as $a(0)=\lim_{t\to 0} a(t)=\frac12$, one shall also think that \eqref{i1} turns into
\begin{align*}
\Delta u=0 \Rightarrow \frac12 u(x)=\fint_{D_r(x)} u(y) \log\frac{r}{|x-y|} \dd y.
\end{align*}
Indeed, this is conjectured in \cite[Remark 1]{Kuznetsov2022}. Unfortunately, this claim is not proven there. The objective of the present note is to provide a short proof of this fact in any dimension $d \geq 2$. To make things precise, we will show:
\begin{lemma}\label{lem}
Let $d\geq 2$, $\Omega\subset \R^d$ be a domain, $\omega_d=\pi^\frac{d}{2}/\Gamma(\frac{d}{2}+1)$ be the volume of the $d-$dimensional unit ball, and let $\Delta u=0$ in $\Omega$. Then, for any $x\in \Omega$ and any $r>0$ such that $\overline{D_r(x)} \subset \Omega$, it holds
\begin{align*}
\frac1d u(x)=\fint_{D_r(x)} u(y) \log\frac{r}{|x-y|} \dd y = \frac{1}{\omega_d r^d} \int_{D_r(x)} u(y) \log\frac{r}{|x-y|} \dd y.
\end{align*}
\end{lemma}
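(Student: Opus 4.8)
The plan is to reduce the weighted volume average to the classical spherical mean value property and then evaluate a single one-dimensional integral. Writing $y = x + \rho\theta$ with $\rho\in(0,r)$ and $\theta\in S^{d-1}$, the volume element becomes $\dd y = \rho^{d-1}\dd\rho\,\dd S(\theta)$, and since $|x-y|=\rho$ the weight $\log(r/|x-y|)$ depends only on $\rho$. Thus
\begin{align*}
\int_{D_r(x)} u(y)\log\frac{r}{|x-y|}\dd y = \int_0^r \rho^{d-1}\log\frac{r}{\rho}\left(\int_{S^{d-1}} u(x+\rho\theta)\dd S(\theta)\right)\dd\rho.
\end{align*}

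The key observation is that the inner angular integral is independent of $\rho$. By the classical mean value property for harmonic functions, $\int_{\del D_\rho(x)} u\,\dd S_y = d\omega_d\rho^{d-1}u(x)$ for every $\rho\in(0,r]$, and rescaling to the unit sphere gives $\int_{S^{d-1}} u(x+\rho\theta)\dd S(\theta) = d\omega_d\,u(x)$. Substituting this factors the constant $d\omega_d u(x)$ out of the integral, leaving only the radial weight $\int_0^r \rho^{d-1}\log(r/\rho)\dd\rho$ to be computed.

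This last integral is elementary: the substitution $\rho = rs$ reduces it to $r^d\int_0^1 s^{d-1}\log(1/s)\dd s$, and one integration by parts (the boundary term vanishes because $s^d\log s\to 0$ as $s\to 0$) yields $\int_0^1 s^{d-1}\log(1/s)\dd s = 1/d^2$, so the radial integral equals $r^d/d^2$. Collecting the constants gives
\begin{align*}
\int_{D_r(x)} u(y)\log\frac{r}{|x-y|}\dd y = d\omega_d u(x)\cdot\frac{r^d}{d^2} = \frac{\omega_d r^d}{d}\,u(x),
\end{align*}
and dividing by $\omega_d r^d$ produces the asserted identity.

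I do not anticipate a genuine obstacle here; the only points requiring care are bookkeeping ones, namely the correct normalization of the sphere's surface measure ($d\omega_d\rho^{d-1}$) and the remark that the weight $\rho^{d-1}\log(r/\rho)$ is integrable at the origin, so that the passage to iterated integrals is justified. The entire mathematical content is carried by the radius-independence of the spherical average, which is precisely the classical mean value property; everything else is a one-dimensional computation, and the approach works verbatim in every dimension $d\geq 2$.
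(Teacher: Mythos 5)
Your proof is correct and follows essentially the same route as the paper: polar coordinates, the radius-independence of the spherical average (the spherical mean value property with the normalization $d\omega_d\rho^{d-1}$), and an elementary radial integral. The only cosmetic difference is that the paper first splits $\log\frac{r}{|x-y|}=\log r-\log|x-y|$ and treats the two pieces with the solid and spherical mean value properties separately, whereas you keep the weight intact and compute $\int_0^r\rho^{d-1}\log\frac{r}{\rho}\dd\rho=r^d/d^2$ directly; both computations are equivalent.
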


Going even further, by the properties of the Bessel function $I_0(t)$, the function $a(t)$ from \eqref{i1} increases strictly monotone and satisfies $a(t)>a(0)=\frac12$ for any $t>0$. As a direct consequence of Theorem~\ref{theo1}, Corollary~1 in \cite{Kuznetsov2022} states that for $\mu>0$ any $\mu-$panharmonic function $u$ with $u\geq 0$ that does not identically vanish inside $\Omega$ satisfies the inequality
\begin{align}\label{ineq1}
\frac12 u(x) < \fint_{D_r(x)} u(y) \log\frac{r}{|x-y|}\dd y
\end{align}
for any admissible disc $D_r(x)\subset \Omega$. As a matter of fact, any nonnegative panharmonic function is subharmonic, that is, $-\Delta u \le 0$ (see e.g.~\cite[Theorem~1 and Remark 2]{Kuznetsov2022}). Kuznetsov therefore conjectured that inequality \eqref{ineq1} also holds for any subharmonic function $u\geq 0$ that does not vanish identically in $\Omega$. However, this is not true; in fact, Lemma~\ref{lem} directly forces the following
\begin{corollary}
Let $\Omega=D_1(0) \subset \R^2$ and set $u(x_1,x_2)=e^{x_1}\sin(x_2)+3$. Then $u$ is harmonic (in particular subharmonic) with $u\geq 0$ in $\Omega$, but \eqref{ineq1} does not hold for any $x\in \Omega$.
\end{corollary}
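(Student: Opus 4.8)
The plan is to exploit the fact that Lemma~\ref{lem} already supplies an \emph{equality} for harmonic functions, which leaves no room for the strict inequality \eqref{ineq1}. Since the proposed $u$ is harmonic—not merely subharmonic—the weighted mean value identity holds with the exact constant $\frac1d=\frac12$ in dimension $d=2$, so the sign in \eqref{ineq1} must degenerate.

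First I would verify that $u(x_1,x_2)=e^{x_1}\sin(x_2)+3$ is harmonic by a direct computation: since $\Delta\big(e^{x_1}\sin x_2\big)=e^{x_1}\sin x_2-e^{x_1}\sin x_2=0$ and constants are harmonic, $\Delta u=0$ on all of $\R^2$, hence in $\Omega=D_1(0)$. In particular $u$ is subharmonic and does not vanish identically.

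Next I would establish $u\ge 0$ on the closed disc $\overline{D_1(0)}$ by an elementary bound. There $|x_1|\le 1$ and $|x_2|\le 1<\pi/2$, so $e^{x_1}\le e$ and $|\sin x_2|\le \sin 1$; hence $e^{x_1}\sin x_2\ge -e\sin 1>-3$, because $e\sin 1\approx 2.29<3$. This gives $u>0$ on the whole (closed) disc. This is the only computation in the argument and presents no real obstacle; it merely records that the additive constant $3$ is chosen large enough to dominate the oscillating term.

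Finally, with $u$ harmonic and $d=2$, Lemma~\ref{lem} gives exactly
$$\frac12 u(x)=\fint_{D_r(x)} u(y)\log\frac{r}{|x-y|}\dd y$$
for every admissible disc $\overline{D_r(x)}\subset\Omega$. As this is an identity, the strict inequality \eqref{ineq1} can hold at no $x\in\Omega$ and for no admissible radius, which proves the corollary. The broader point is that the conjecture already fails on the simplest class of positive subharmonic functions, namely positive harmonic ones: shifting \emph{any} bounded harmonic function by a sufficiently large constant yields such a counterexample, and the only thing beyond Lemma~\ref{lem} that must be checked is nonnegativity.
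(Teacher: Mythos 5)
Your proposal is correct and follows exactly the paper's route: verify that $u$ is harmonic and nonnegative on $D_1(0)$ (the bound $e\sin 1<3$ is precisely the ``short calculation'' the paper alludes to), then invoke Lemma~\ref{lem} to obtain equality in place of the strict inequality \eqref{ineq1}. Your closing remark that any bounded harmonic function shifted by a large enough constant works is the same observation the paper records in the subsequent Remark.
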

\begin{proof}
A short calculation shows that $u\geq 0$ in $\Omega$. Moreover, obviously, $\Delta u=0$, so the assumptions of Lemma~\ref{lem} are satisfied and we conclude easily.
\end{proof}
\begin{rem}
Since $e^{x_1}\sin(x_2)$ is analytic, we obviously can exchange $\Omega=D_1(0)$ in the previous Corollary by any domain $\Omega\subset\{(x_1,...,x_d)\in \R^d: x_1<c\}$ for some $c\in\R$ and recall the proof for $\tilde{u}(x_1,...,x_d)=e^{x_1}\sin(x_2) - \min_{(x_1,...,x_d)\in\Omega} e^{x_1}\sin(x_2)$.
\end{rem}
The fact that inequality \eqref{ineq1} holds for $\mu-$panharmonic functions with $\mu>0$ is due to $a(t)>a(0)=\frac12$ for any $t>0$ and the MVP \eqref{i1}. Of course, one might instead ask whether inequality \eqref{ineq1} holds for any function $u\geq 0$ with $u\not \equiv 0$ which is \emph{strictly} subharmonic, i.e., $-\Delta u \lneq 0$ in $\Omega$.

\section{Proof of Lemma \ref{lem}}
Before proving Lemma \ref{lem}, we recall the well-known fact that harmonic functions satisfy both the mean value property and the spherical mean value property, i.e.,
\begin{align}
u(x) &= \fint_{D_r(x)} u(y) \dd y = \frac{1}{\omega_d r^d} \int_{D_r(x)} u(y) \dd y \label{1}\\
&= \fint_{\del D_r(x)} u(y) \dd \sigma(y) = \frac{1}{d \omega_d r^{d-1}}\int_{\del D_r(x)} u(y) \dd \sigma(y), \label{tri}
\end{align}
for any admissible $r>0$. Note further that by rescaling, we have from \eqref{tri}
\begin{align}\label{2}
u(x)=\fint_{\del D_r(x)} u(y) \dd \sigma(y) = \fint_{\del D_1(0)} u(x+r\phi) \dd \sigma(\phi) = \frac{1}{d \omega_d}\int_{\del D_1(0)} u(x+r\phi) \dd \sigma(\phi).
\end{align}

We are now in the position to prove Lemma \ref{lem}.
\begin{proof}[Proof of Lemma \ref{lem}]
First, note that
\begin{align*}
\fint_{D_r(x)} u(y) \log\frac{r}{|x-y|} \dd y &= \fint_{D_r(x)} u(y) \log r \dd y - \fint_{D_r(x)} u(y)\log |x-y| \dd y\\
&= u(x)\log r - \fint_{D_r(x)} u(y) \log |x-y| \dd y
\end{align*}
since $u$ is harmonic and so satisfies \eqref{1}. In turn, it is enough to show
\begin{align*}
\fint_{D_r(x)} u(y) \log |x-y| \dd y = u(x) \bigg[\log r-\frac1d\bigg].
\end{align*}
For the sequel, we set without loss of generality $x=0$ (otherwise do a transformation $z=x-y$ in the integrals and repeat the computations for $v_x(z)=u(x-z)$). Using that $u$ also satisfies \eqref{2}, we deduce
\begin{align*}
\int_{D_r(0)} u(y)\log|y| \dd y &\overset{\phantom{\eqref{2}}}{=} \int_0^r \int_{\del D_1(0)} u(s \phi) \log s \cdot s^{d-1}\dd \sigma(\phi) \dd s = \int_0^r s^{d-1}\log s \int_{\del D_1(0)} u(s \phi) \dd \sigma(\phi) \dd s\\
&\overset{\eqref{2}}{=} d \omega_d u(0) \int_0^r s^{d-1}\log s \dd s = d \omega_d u(0)\bigg[\frac{r^d}{d}\log r-\frac{r^d}{d^2}\bigg] = \omega_d r^d u(0) \bigg[\log r-\frac1d\bigg].
\end{align*}
Dividing by $\omega_d r^d$, this finishes the proof.
\end{proof}

\section{Acknowledgements}
The author was supported by the Czech Science Foundation (GA\v{C}R) project GA22-01591S. The Institute of Mathematics, CAS is supported by RVO:67985840.

\bibliographystyle{amsalpha}

\end{document}